\newcommand{\ma}{\mathfrak a}
\newcommand{\mg}{\mathfrak g}
\newcommand{\mh}{\mathfrak h}
\newcommand{\mm}{\mathfrak m}
\newcommand{\mn}{\mathfrak n}
\newcommand{\mpp}{\mathfrak p}
\newcommand{\mv}{\mathfrak v}
\newcommand{\mz}{\mathfrak z}
\newcommand{\mso}{\mathfrak{so}}
\newcommand{\msu}{\mathfrak{su}}
\newcommand{\msp}{\mathfrak{sp}}
\newcommand{\msl}{\mathfrak{sl}}
\newcommand{\HH}{\mathbb H}
\newcommand{\RR}{\mathbb R}
\newcommand{\CC}{\mathbb C}
\newcommand{\OO}{\mathbb O}
\newtheorem{Theorem}{Theorem}
\newtheorem{Proposition}{Proposition}
\newtheorem{Corollary}{Corollary}
\theoremstyle{remark}
\newtheorem{Remark}{Remark}
\begin{document}

\title[Parabolic nilradicals of Heisenberg type, II]{Parabolic nilradicals of Heisenberg type, II}
\author{Aroldo Kaplan}
\author{Mauro Subils}
\date{\today}
\address{A. Kaplan: CIEM-CONICET, Famaf, U.N.C., Cordoba 5000, Argentina, and Department of Mathematics, U. of Massachusetts, Amherst, MA 01002, USA}
\email{kaplan@math.umass.edu}
\address{M. Subils: CONICET-FCEIA, U.N.R., Pellegrini 250, 2000 Rosario, Argentina.} \email{msubils@gmail.com}

\thanks{This work was partially supported by grants PICT from CONICET for both authors, and a post-doctoral Fellowship from CONICET and a grant from ANPCyT-FONCyT for the second author.}

\maketitle

%\date{Received: date / Accepted: date}
% The correct dates will be entered by the editor

\maketitle

%\begin{abstract}Every real simple Lie algebra which is not compact or isomorphic to $\mso(1,n)$ contains a unique standard parabolic subalgebra whose nilradical is a Heisenberg-like algebra associated to a division algebra. Some geometric consequences are discussed.  %<-------------------\end{abstract}

This is a sequel to \cite{KS}, where we proved that every real simple non-compact Lie algebra different from $\mso(1,n)$ has an essentially unique parabolic subalgebra whose nilradical is a  Heisenberg algebra of a division algebra, and deduced some consequences regarding their Tanaka prolongations.

 Here we discuss their symmetries, the associated parabolic geometries,  and the riemannian geometry of the harmonic spaces $\mathbb R_+ N$, having the former as conformal infinities.

It is somewhat remarkable that such basic result had not been noticed before. Since \cite{KS} was written we found that it can also be deduced from \cite{Ho}\cite{W}\cite{Sa}\cite{Ch} by identifying the building blocks of Howe's  H-tower groups\footnote{Table 1 in \cite{Ho} is basically the same as ours}. Still, our construction is independent of type H and representation theory, and explains part of  the ``high degree of symmetry'' observed in \cite{Ho}.

We thank E. Hullet, A. Tiraboschi and J. Vargas for fruitful conversations.

\

 {\bf 1. Algebras of type divH and associated parabolics}

\

Recall that a normed real division algebra $\mathbb A$ determines two series of graded nilpotent algebras  $$\mathfrak h_n(\mathbb A)=(\mathbb A^{n}\oplus\mathbb A^{n})\oplus \mathbb A,\qquad\mathfrak h'_{p,q}(\mathbb A)= (\mathbb A^{p}\oplus\mathbb A^{q})\oplus \Im(\mathbb A)$$ with respective brackets $$[ x +y+t,\hat x  +\hat y+\hat t]= \sum x_i\hat y_i - \hat x_iy_i$$ $$[x +y+t,\hat x +\hat y+\hat t]= -\Im(\sum_j x_j\bar {\hat x}_j + \sum_k \bar {\hat y}_ky_k).$$Excluding  the $\mh_{p,q}'(\RR)$'s, which are abelian, and the $\mh_{n}(\OO)$, $\mh_{p,q}'(\OO)$ for $n>1$ or $p+q>1$, which are non-prolongable (see 3.2 below), and taking the isomorphism $\mathfrak h'_{p,q}(\mathbb C)\cong \mathfrak h_{p+q}(\mathbb R)$ into account, the remaining ones are $$\mh_n(\RR)\qquad\mh_n(\CC)\qquad\mh_{n}(\HH)\qquad\mh_{p,q}(\HH)\qquad\mh_{1}(\OO)\qquad\mh_{1,0}(\OO).$$ We call these algebras and associated objects  \emph{of type $divH$},  or when convenient, $\mathfrak h(\mathbb A)$.

Let now $\mg$ be a simple Lie algebra, $\mpp\subset \mg$ a  parabolic subalgebra, and $\mn\subset\mpp$ the nilradical of $\mpp$. If $\mg$ is compact, then $\mpp$ is either $0$ or $\mg$. If $\mpp$ is proper
and $\mg$ is isomorphic to $\mso(1,n)$, then $\mpp$ is unique up to conjugacy and $\mn$ is abelian. Moreover, the $\mso(1,n)$ are the only simple algebras with these properties.
Here we will be interested in the remaining ones, those which contain parabolic subalgebras with non-abelian nilradical, the set of which we which often denote by $\mathfrak S$.

For a graded nilpotent Lie algebra $\mn=\mg^{-1}\oplus\mg^{-2}\oplus \ldots$ to be the nilradical of a parabolic subalgebra of a semisimple algebra is equivalent to asking that it can be ``prolonged'' to a finite dimensional graded semisimple algebra
\begin{equation}\label{eq:prol}
\mg(\mn,\mg_0)=\mg^{k}\oplus\ldots \oplus \mg^{1}\oplus\mg^0\oplus\mg^{-1}\oplus \ldots\oplus\mg^{-k}
\end{equation}
where $\mg^{i}=\theta\mg^{-i}$ for some Cartan involution. This already  implies that $\mathrm{Aut}(\mn)$ must be large enough, so as to contain such  $\mg^0$. The associated parabolic subalgebra is $\mg^0\oplus\mn$.

The main results of \cite{KS} can be resumed as follows.

\begin{Theorem}
	\label{teo:main}\cite{KS}

	\begin{enumerate}
		
		\item[(a)] Every simple non-compact Lie algebra not isomorphic to $\mso(1,n)$ has a parabolic subalgebra with non-singular nilradical.
		
		\item[(b)] Any two are conjugate by the adjoint group.
		
		\item[(c)] The nilradicals that appear are exactly the algebras of type $divH$.
		
		\item [(d)] An algebra of type H is of type divH if and only its Tanaka prolongation is not trivial.
	\end{enumerate}
	
\end{Theorem}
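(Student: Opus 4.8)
The plan is to deduce parts (a)--(c) from the classification of real non-compact simple Lie algebras by Satake diagrams---equivalently, from the list of restricted root systems with their multiplicities---by a finite but careful inspection, and then to obtain (d) from (a)--(c) together with Tanaka's prolongation machinery. The starting point is that every parabolic subalgebra of a non-compact simple $\mg$ is $\mathrm{Ad}(\mg)$-conjugate to a standard one $\mpp_S$ attached to a subset $S$ of a system $\Delta$ of simple restricted roots, and that $\mpp_S$ is conjugate to $\mpp_{S'}$ only when $S'$ arises from $S$ by a symmetry of the restricted root data. The nilradical $\mn_S=\bigoplus_{j\ge1}\mg_j$ is graded by the $S$-height of restricted roots; it is two-step exactly when the $S$-height of the highest restricted root $\delta$ is $1$ or $2$, abelian in the first case, and, in the second, non-singular---meaning $\mathrm{ad}_x\colon\mg_1\to\mg_2$ is onto for every $0\neq x\in\mg_1$---precisely when the induced bracket form on $\mg_1$ is non-degenerate. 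I would first record this criterion and note that it depends only on the restricted root system and its multiplicities, not on the particular real form.

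\emph{Existence and uniqueness.} Going through the possible restricted systems, one finds that, up to a Satake symmetry, there is exactly one $S$ of $\delta$-height $2$ with $\mn_S$ non-singular: the two extreme nodes when the restricted system has type $A_n$, and a single distinguished node otherwise. Every other admissible $S$ is discarded by exhibiting a single restricted-root vector $x$ with $\mathrm{ad}_x$ not onto $\mg_2$. The only restricted system admitting no such $S$ at all is $A_1$ with a single node and no $2\delta$-root, which by the structure theory occurs precisely when $\mg\cong\mso(1,n)$. This proves (a), and the essential uniqueness of the distinguished $S$ proves (b).

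\emph{Identification.} For (c) one must recognize each such $\mn_S$ as an algebra of type $divH$. From the multiplicities one computes $\dim\mg_1$ and $\dim\mg_2$ and finds $\dim\mg_2\in\{1,2,3,4,7,8\}$; the bracket $\mg_1\times\mg_1\to\mg_2$, together with the $\mg_0$-invariant quadratic form inherited from the Killing form, endows $\mg_2$ with a composition-algebra structure, so by Hurwitz's theorem $\mg_2$ is one of $\RR,\CC,\Im\HH,\HH,\Im\OO,\OO$ and $\mg_1$ becomes a (possibly indefinite) module over the corresponding $\mathbb A$; comparing brackets then identifies $\mn_S$ with $\mh_n(\mathbb A)$ or $\mh'_{p,q}(\mathbb A)$. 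Conversely, each of these does occur: the real ones for the split forms and for complex simple algebras regarded as real; $\mh_n(\HH)$ for $\msl(n,\HH)=\msu^*(2n)$ and $\mh'_{p,q}(\HH)$ for $\msp(p,q)$; and the two octonionic survivors $\mh_1(\OO)$ and $\mh'_{1,0}(\OO)$ for $\me_{6(-26)}$ (``$\msl(3,\OO)$'') and $\mf_{4(-20)}$. The remaining octonionic algebras do not appear because ``$\msl(n,\OO)$'' is not a Lie algebra for $n>3$---consistently with the non-prolongability established in 3.2 below. The delicate point here is not the bookkeeping but proving that the bracket is genuinely \emph{equivalent} to one of the standard models rather than merely of the same dimensions; this uses the Jacobi identity inside $\mg$ and the $\theta$-stability of the grading, and is most laborious for $\HH$ and $\OO$.

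\emph{Prolongation.} Finally (d). If an algebra $\mn$ of type H is of type $divH$, then by (a) and (c) it prolongs to a $|2|$-graded simple algebra $\mg$ with nonzero positive part $\theta\mn$, so its full Tanaka prolongation relative to $\mathrm{Der}_0(\mn)$ is non-trivial. For the converse, write $\mn=\mv\oplus\mz$ with $\mv$ a module over $\mathrm{Cl}(\mz)$; a degree-one element of the Tanaka prolongation of $(\mn,\mathrm{Der}_0(\mn))$ is essentially a $\mz$-valued $1$-form on $\mv$ compatible with the $\mathrm{Der}_0(\mn)$-action, and one shows that a nonzero such form can exist only when $\mv$ is a signature form of $\mathbb A^n$ or $\mathbb A^p\oplus\mathbb A^q$ for a normed division algebra $\mathbb A$, i.e. $\mn$ is of type $divH$. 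Together with the equivalence step in (c), I expect this converse---ruling out all ``oversized'' Clifford modules through the prolongation obstruction---to be the main work; the rest is organized case-checking over the Satake classification.
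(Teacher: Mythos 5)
A preliminary caveat: this paper does not actually prove Theorem~1 --- it is imported from \cite{KS}, and the only trace of its proof here is the remark that Table~1 is ``obtained applying the construction in the proof of Theorem 1 case by case''. Measured against that, your overall strategy (standard parabolics $\mpp_S$ indexed by subsets $S$ of simple restricted roots, the height-two/non-singularity criterion, a finite check over the classification, and identification of the survivors with the $\mathfrak h(\mathbb A)$) is the same general line as \cite{KS}, which builds the distinguished grading from the highest restricted root and verifies the rest case by case.

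There are, however, genuine gaps precisely where the content sits. First, for (b) your conjugacy criterion is wrong: two \emph{distinct} standard parabolics $\mpp_S$, $\mpp_{S'}$ are never $\mathrm{Ad}(\mg)$-conjugate, even when $S'=\sigma(S)$ for a diagram symmetry, since $\sigma$ need not be inner (e.g.\ for $\msl(n,\RR)$). So ``exactly one $S$ up to a Satake symmetry'' does not yield conjugacy by the adjoint group; you must show the distinguished $S$ is literally the unique admissible subset --- which does hold because it turns out to be symmetry-invariant ($\{\alpha_1,\alpha_{n-1}\}$ or a single node), but that is exactly what needs to be checked and your hedge leaves it open. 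Second, in (d) you misidentify the object: $\mathrm{Hom}(\mv,\mz)$ is the degree $-1$ part of $\mathrm{Der}(\mn)$ (the paper itself records $\mathrm{Der}(\mn)=\mathrm{Der_{gr}}(\mn)\oplus\mathrm{Hom}(\mv,\mz)$), whereas the degree $+1$ piece of the Tanaka prolongation consists of compatible pairs in $\mathrm{Hom}(\mg^{-1},\mg^0)\oplus\mathrm{Hom}(\mg^{-2},\mg^{-1})$; ruling out all oversized Clifford modules by direct analysis of such pairs is a substantial computation you do not indicate how to carry out. The efficient route for the converse of (d) is the Tanaka--Yamaguchi dichotomy: a non-trivial finite prolongation of a fundamental non-degenerate depth-two symbol is semisimple with $\mn$ as a parabolic nilradical, whereupon (c) applies. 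Third, the Hurwitz step in (c) is asserted rather than proved: a quadratic form on $\mg^{-2}$ together with the bracket $\mv\times\mv\to\mz$ does not by itself give a composition algebra (type-H algebras exist with centers of every dimension), so the multiplicative structure must be extracted from the existence of the positive half of the grading --- which is the ``delicate point'' you flag but do not resolve.
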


One consequence is that divH algebras are the most symmetric among 2-step non-singular nilpotents, in the following sense.
Let  $\mn= \mv\oplus \mz$
be a 2-graded nilpotent Lie algebra
with center $\mz$ and let $m=\dim\mz$ and $n=\dim\mv$.
Since
$\mathrm {Der}(\mathfrak n)= \mathrm{Der_{gr}}(\mathfrak n)\oplus
\mathrm{Hom}(\mv,\mz),$
$$\dim\mathrm {Der}(\mathfrak n)=\dim\mathrm{Der_{gr}}(\mathfrak n)+mn.$$
Generically,
$\dim\mathrm{Der_{gr}}(\mn)=1$. If $\mn$ is of type H,
$$\dim\mathrm{Der_{gr}}(\mn)\geq \frac{1}{2}m(m+1).$$

Now let $N$ be the csc Lie group with  Lie algebra $\mn$, $\mathcal V$ the left-invariant distribution on $N$ determined by $\mv$, and
$\mathrm{Inf}(\mn)$ the algebra of infinitesimal automorphisms of
$\mathcal V$ at $e$, that is,  germs of vector fields $X$ on $N$ near $e$ such that $L_X(\mathcal V)\subset \mathcal V$. Clearly, $\mathrm{Inf}(\mn)\supset \mathrm{Der_{gr}}(\mathfrak n)$. Then, generically, even among type H,

$$\dim\mathrm{Inf}(\mn)/ \mathrm{Der_{gr}}(\mathfrak n)=\dim\mn.$$
For type $divH$ instead,
$$\dim\mathrm{Inf}(\mn)/\mathrm{Der_{gr}}(\mathfrak n)\geq 2\dim\mn.$$

\

{\bf 2. Langlands decompositions}

\

 Let $\mn$ be of type $divH$, $\mpp$ a standard parabolic subalgebra of some simple Lie algebra $\mg$ having $\mn$ as nilradical, and
	$$\mpp = \mm \oplus \ma\oplus\mn$$
	its Langlands decomposition. Then
	
	\begin{Proposition}
	$$\mm =\mm_o \oplus \mathfrak{spin}(\mn),\qquad \ma=\ma_o\oplus\ma_\delta,\qquad \mg_0 =\mm_o\oplus \ {\mathfrak {spin}}(\mn)\oplus\ma$$
	where
	
	$\mm_o$ is the centralizer of $\mz$ in $\mm$;
	
	${\mathfrak{spin}}(\mn)\cong\mso(\mz)$ acts on $\mz$ by the  standard representation and on $\mv$ as a sum of spin representations;
	
	$\ma_o=\ma\cap\mathrm{Der}_o(\mn)$, which  is $0$ if $\mpp$ is maximal and  1-dimensional otherwise;

    and $\ma_\delta=\RR\delta$.
	
	\noindent The individual factors of  the resulting decomposition
	$$\mpp=(\mm_o\oplus \ma_o)\oplus\mathfrak{spin}(\mn) \oplus (\ma_\delta \oplus\ \mn)$$
	are listed in Table 1.
\end{Proposition}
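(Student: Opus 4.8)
The plan is to obtain the decomposition from an intrinsic description of $\mathrm{Der_{gr}}(\mn)$ coming from the Clifford structure of a type H algebra, and then to sort the summands of $\mg_0$ according to whether they are split, compact, or act trivially on $\mz$. Write $\mn=\mv\oplus\mz$, $m=\dim\mz$, equip $\mn$ with the inner product $\langle\,,\,\rangle=B(\cdot,\theta\cdot)$, and let $J_z\in\mso(\mv)$, $z\in\mz$, be defined by $\langle J_zx,y\rangle=\langle[x,y],z\rangle$, so that $J_zJ_w+J_wJ_z=-2\langle z,w\rangle\,\mathrm{Id}$ since $\mn$ is of type H. A grading--preserving derivation is a pair $(A,C)\in\mathfrak{gl}(\mv)\oplus\mathfrak{gl}(\mz)$ with $J_{C^{*}z}=A^{*}J_z+J_zA$ for all $z$; composing this identity with $J_z$ on both sides, adding, and taking traces over $\mv$ shows the symmetric part of $C$ is scalar, so restriction to $\mz$ is a homomorphism $\mathrm{Der_{gr}}(\mn)\to\mathfrak{co}(\mz)=\mso(\mz)\oplus\RR\,\mathrm{Id}$ with kernel $\mathrm{Der}_o(\mn)=\{(A,0):A^{*}J_z+J_zA=0\ \forall z\}$. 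It is surjective: the grading derivation $\delta$ (acting by $-1$ on $\mv$, $-2$ on $\mz$) hits $\RR\,\mathrm{Id}$, and $z_i\wedge z_j\mapsto\tfrac14[J_{z_i},J_{z_j}]$ embeds $\mso(\mz)$, acting standardly on $\mz$ and by spinors on $\mv$; its image is what we call $\mathfrak{spin}(\mn)$. Hence
$$\mathrm{Der_{gr}}(\mn)=\mathrm{Der}_o(\mn)\oplus\mathfrak{spin}(\mn)\oplus\RR\delta,\qquad\mathfrak{spin}(\mn)\oplus\RR\delta\cong\mathfrak{co}(\mz),$$
and $\mm_o=Z_\mm(\mz)=\mm\cap\mathrm{Der}_o(\mn)$, $\ma_o=\ma\cap\mathrm{Der}_o(\mn)$, $\ma_\delta=\RR\delta$ are precisely the objects named in the statement; in particular the description of $\mathfrak{spin}(\mn)$ is already contained here.

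Next I would locate $\mg_0$ inside $\mathrm{Der_{gr}}(\mn)$. The grading element $\delta$ always lies in $\mg_0$, and the point is that $\mg_0$ acts on $\mz=\mg^{-2}$ by the \emph{whole} conformal algebra $\mathfrak{co}(\mz)$, so that $\mathfrak{spin}(\mn)\oplus\RR\delta\subseteq\mg_0$ and therefore $\mg_0=(\mg_0\cap\mathrm{Der}_o(\mn))\oplus\mathfrak{spin}(\mn)\oplus\RR\delta$. (In fact $\mg_0=\mathrm{Der_{gr}}(\mn)$, but only this inclusion is used below.)

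Granting that, the Langlands pieces separate cleanly. The functional $X\mapsto\mathrm{tr}(X|_{\mz})$ on $\mg_0$ vanishes on $[\mg_0,\mg_0]$ and on the compact part of the centre of $\mg_0$, hence on $\mm$, vanishes on $\mathrm{Der}_o(\mn)$ and on $\mathfrak{spin}(\mn)$, and is nonzero on $\delta$; so its kernel in $\mathrm{Der_{gr}}(\mn)$ is $\mathrm{Der}_o(\mn)\oplus\mathfrak{spin}(\mn)$, which forces $\mm\subseteq\mathrm{Der}_o(\mn)\oplus\mathfrak{spin}(\mn)$ and makes $\ma$ surject onto $\RR\delta$ with kernel $\ma\cap\mathrm{Der}_o(\mn)=\ma_o$ (an element both $\RR$--split and elliptic is zero, so $\ma\cap\mathfrak{spin}(\mn)=0$). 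Since $\mathfrak{spin}(\mn)\cong\mso(\mz)$ is semisimple when $m\geq3$, it lies in $[\mg_0,\mg_0]\subseteq\mm$; when $m\leq2$ it is $0$ or a central elliptic line, again in $\mm$. Hence $\mm=\mm_o\oplus\mathfrak{spin}(\mn)$, $\ma=\ma_o\oplus\ma_\delta$, and $\mg_0=\mm_o\oplus\mathfrak{spin}(\mn)\oplus\ma$. Finally $\dim\ma$ is the corank of $\mpp$, which equals $1$ iff $\mpp$ is maximal; and since \cite{KS} gives that this corank is at most $2$, $\ma_o$ is $0$ for maximal $\mpp$ and one--dimensional otherwise.

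The one genuinely delicate step is the assertion that the Levi of a divH parabolic acts on its centre $\mz$ by the full $\mathfrak{co}(\mz)$, i.e. that $\mg_0$ contains $\mathfrak{spin}(\mn)\oplus\RR\delta$ rather than a proper reductive part of it; if proved in a case--free way the whole argument is uniform. Otherwise this, the precise shape of $\mm_o$, and the multiplicity with which the spinor module of $\mso(m)$ occurs in $\mv$ are obtained by going through the six families of Theorem~\ref{teo:main}(c) and the standard descriptions of their parabolic subalgebras and Levi factors --- which is what Table~1 records.
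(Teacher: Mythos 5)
Your argument is essentially correct but takes a genuinely different route from the paper: the paper's entire proof is one line, ``all the assertions follow from Table 1, which is obtained applying the construction in the proof of Theorem 1 case by case'' --- i.e.\ pure inspection of the classification. You instead give a structural derivation: the Clifford-algebra computation showing $\mathrm{Der_{gr}}(\mn)=\mathrm{Der}_o(\mn)\oplus\mathfrak{spin}(\mn)\oplus\RR\delta$ with $\mathfrak{spin}(\mn)\oplus\RR\delta\cong\mathfrak{co}(\mz)$ (the trace trick forcing the symmetric part of $C$ to be scalar is correct, and the surjectivity via $\delta$ and $z\wedge w\mapsto\frac14[J_z,J_w]$ is the standard type-H fact), followed by the separation of $\mm$ from $\ma$ using the functional $X\mapsto\mathrm{tr}(X|_{\mz})$ and the split-versus-elliptic dichotomy. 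This buys a uniform, case-free explanation of \emph{why} the Langlands pieces sort themselves as stated, which the paper does not offer; what it does not buy is the content of Table 1 itself ($\mm_o$, the spin multiplicities on $\mv$), which both you and the paper obtain only by running through the families. The one real gap is the one you flag yourself: the inclusion $\mathfrak{spin}(\mn)\oplus\RR\delta\subseteq\mg_0$, equivalently that $\mg_0$ surjects onto $\mathfrak{co}(\mz)$, is nowhere proved uniformly --- a case-free argument would use, e.g., that $\mg_0\supseteq[\mg_{-2},\theta\mg_{-2}]$ together with nonsingularity of $\mn$, but as written you (like the paper) fall back on the case-by-case construction for this step, so your proof is not actually independent of the table. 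A second, minor soft spot is the $m=2$ case ($\mn=\mh_n(\CC)$), where $\mathfrak{spin}(\mn)\cong\mso(2)$ is abelian and the inclusion $\mathfrak{spin}(\mn)\subseteq\mm$ needs the standard fact that elliptic central elements of the Levi lie in $\mm$ rather than the derived-algebra argument; you assert this correctly but without justification.
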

\begin{proof}
	All the assertions follow from  Table 1, which is obtained applying the construction in the proof of Theorem 1 case by case.
\end{proof}

Given a simple $\mg\in\mathfrak S$, denote by $\mpp(\mg)$ the parabolic subalgebra with nilradical $\mn(\mg)$ of type
$divH$, and $\mm(\mg)$ its Levi factor.

\begin{Corollary}
	$\mg\in \mathfrak S$ has a complex or quaternionic structure  if and only if $\mn(\mg)$ is  $\mathfrak  h_n(\mathbb C)$ or $\mathfrak  h_n(\mathbb H)$, respectively.
\end{Corollary}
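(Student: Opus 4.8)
The plan is to reduce everything to the nilradical and then complexify, using Theorem~\ref{teo:main} and the arithmetic of $\mathbb A\otimes_\RR\CC$ for the normed division algebras, namely $\RR\otimes_\RR\CC\cong\CC$, $\CC\otimes_\RR\CC\cong\CC\times\CC$ and $\HH\otimes_\RR\CC\cong M_2(\CC)$.

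For the complex case I would argue as follows. If $\mg$ has a complex structure, i.e. $\mg=\ms^\RR$ for a complex simple $\ms$, then every parabolic of $\mg$ is the realification of a (complex) parabolic of $\ms$, so $\mpp(\mg)$ is a complex parabolic and $\mn(\mg)$ a complex Lie algebra. But the only algebra of type $divH$ that is complex is $\mh_n(\CC)$: the algebras $\mh_n(\RR)$, $\mh_{p,q}(\HH)$ and $\mh_{1,0}(\OO)$ have odd dimension, while for $\mh_n(\HH)$ and $\mh_1(\OO)$ the complexification (computed below) is indecomposable, hence they are not realifications of complex algebras. So $\mn(\mg)\cong\mh_n(\CC)$. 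Conversely, suppose $\mn(\mg)\cong\mh_n(\CC)$ and complexify: $\mn(\mg)_\CC$ is the nilradical of $\mpp(\mg)_\CC\subset\mg_\CC$, and since the bracket of $\mh_n(\CC)$ is $\CC$-bilinear and $\CC\otimes_\RR\CC\cong\CC\times\CC$, the algebra $\mn(\mg)_\CC$ splits as a product of two complex Heisenberg algebras, so its centre is reducible. On the other hand, for any $divH$ algebra the alternating bracket on $\mg^{-1}$ is nondegenerate, whence $\mz(\mn(\mg))=\mg^{-2}$ is the bottom graded piece; were $\mg_\CC$ simple, this piece would be irreducible over the degree-zero part $\mg^0_\CC$, a contradiction. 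Hence $\mg_\CC$ is not simple, i.e. $\mg$ has a complex structure.

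For the quaternionic case one runs the parallel argument with $\HH\otimes_\RR\CC\cong M_2(\CC)$ in place of $\CC\times\CC$. If $\mn(\mg)\cong\mh_n(\HH)$ then $\mn(\mg)_\CC\cong\mh_n(M_2(\CC))$, which is exactly the nilradical of the block-$(2,2n,2)$ parabolic of $\msl_{2n+4}(\CC)$, and (this nilradical occurring as a parabolic nilradical only there) $\mg_\CC\cong\msl_{2n+4}(\CC)$; among the real forms $\msl_{2n+4}(\RR)$, the $\msu(p,q)$ with $p+q=2n+4$, and $\msu^*(2n+4)$, only $\msu^*(2n+4)=\msl_{n+2}(\HH)$ has the block-$(2,2n,2)$ parabolic as its distinguished one — the others have distinguished nilradicals that are Heisenberg algebras with $1$-dimensional centre — so $\mg\cong\msl_{n+2}(\HH)$ has a quaternionic structure. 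The converse is the direct computation that the distinguished parabolic of $\msl_m(\HH)$ is the block-$(1,m-2,1)$ one over $\HH$, whose nilradical is visibly $\mh_{m-2}(\HH)$.

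The step I expect to be the main obstacle is the quaternionic ``if'' direction: unlike $\mh_n(\CC)_\CC$, the complexification $\mh_n(M_2(\CC))$ is indecomposable, so one cannot deduce non-simplicity of $\mg_\CC$ as in the complex case and must instead pin down the complex algebra carrying $\mh_n(M_2(\CC))$ as a parabolic nilradical, and then single out $\msu^*(2n+4)$ among its real forms by matching the distinguished parabolic. Both of these, together with the check that the octonionic algebras $\mh_1(\OO)$ and $\mh_{1,0}(\OO)$ (whose dimensions $24$ and $15$ are inconclusive on their own) are excluded, are most transparently settled by reading off Table 1, which lists every $\mg\in\mathfrak S$ together with its nilradical $\mn(\mg)$.
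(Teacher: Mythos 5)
Your proposal is correct, but it takes a genuinely different route from the paper, whose entire argument is inspection of Table 1: the Corollary carries no separate proof and is meant to be read off from the table (the complex simple algebras $\msl(n,\CC)$, $\msp(n,\CC)$, $\mso(n,\CC)$, $E_6$, $E_7$, $E_8$, $F_4$, $G_2$ are precisely the rows with $\mn=\mh_k(\CC)$, and $\msu^{*}(2n)=\msl(n,\HH)$ is the unique row with $\mn=\mh_k(\HH)$). What you add is a structural explanation: a complex structure on $\mg$ forces $\mpp(\mg)$, hence $\mn(\mg)$, to be complex, and the parity/indecomposability argument shows $\mh_n(\CC)$ is the only complex $divH$ algebra; conversely, $\mh_n(\CC)\otimes_{\RR}\CC$ splits as a product of two nonabelian ideals, so $\mz(\mn_\CC)=\mg^{-2}_\CC$ is a reducible $\mg^0_\CC$-module, contradicting the irreducibility of the lowest graded piece of a graded simple complex Lie algebra. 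That last mechanism is more illuminating than anything in the paper, though you should justify the irreducibility (standard: a lowest weight vector of any irreducible summand of $\mg^{-k}$ is annihilated by the full negative nilradical of $\mg$, hence is the lowest root vector). By contrast, your quaternionic ``if'' direction --- identifying $\msl(2n+4,\CC)$ as the only complex simple algebra with $\mh_n(M_2(\CC))$ as a parabolic nilradical and then matching real forms --- is exactly the case-by-case identification that Table 1 encodes, as you concede, so there the two proofs coincide in substance; note only that you need $\mg_\CC$ simple before invoking that uniqueness, which does follow since otherwise $\mg$ would be complex and the complex case would force $\mn(\mg)\cong\mh_k(\CC)$. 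Finally, the paper never defines ``quaternionic structure''; your reading ($\mg\cong\msl_m(\HH)=\msu^{*}(2m)$) is the only one consistent with the table, since $\msp(p,q)$ and $\mso^{*}(2n)$ are quaternionic in other common senses yet have nilradicals $\mh'_{p-1,q-1}(\HH)$ and $\mh_{2n-4}(\RR)$.
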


\begin{Proposition}
	$\mpp(\mg)$ is maximal parabolic except for $\mg\cong$
	$\msl(n,\RR)$, $\msl(n,\CC)$, $\msu^{*}(2n)$, and $EIV$.
	It is minimal iff $\mg\cong$
	$\msu(1,n)$, $\msp(1,n)$, $\msu^{*}(6)$, $FII$, $\msl(3, \RR)$, $\msl(3,\CC)$, or $EIV$.
\end{Proposition}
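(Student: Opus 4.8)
The plan is to reduce both claims to counting how many simple restricted roots are deleted in defining the standard parabolic $\mpp(\mg)$. Call that set $\Sigma$; then $\dim_{\RR}\ma=|\Sigma|$, so $\mpp(\mg)$ is maximal exactly when $|\Sigma|=1$ and minimal exactly when $|\Sigma|=\mathrm{rank}_{\RR}\mg$. By Proposition 1 we already know $\ma=\ma_o\oplus\ma_\delta$ with $\dim\ma_\delta=1$, hence $|\Sigma|=1+\dim\ma_o$; in particular $\mpp(\mg)$ is non-maximal precisely when $\ma_o\neq0$, and then $|\Sigma|=2$.

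Before using any classification I would record the uniform bound $|\Sigma|\le2$. Since $\mn(\mg)$ is $2$-step and non-abelian, the grading \eqref{eq:prol} is a $|2|$-grading with $\mg^{-2}=[\mn,\mn]\neq0$; hence the highest restricted root of $\mg$ has $\Sigma$-height exactly $2$ (the $\Sigma$-height of $\sum_i c_i\alpha_i$ being $\sum_{\alpha_i\in\Sigma}c_i$), and since every simple root occurs in it with coefficient $\ge1$ this forces $|\Sigma|\le2$. Thus $\mpp(\mg)$ is maximal, or else deletes exactly two nodes; in the latter case it is minimal iff $\mathrm{rank}_{\RR}\mg=2$, and in the former case iff $\mathrm{rank}_{\RR}\mg=1$.

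Next I would determine the non-maximal cases. One direction is immediate: if $\mg$ has restricted root system of type $A_\ell$ with $\ell\ge2$, then no simple coefficient in any root exceeds $1$, so every maximal parabolic of $\mg$ has abelian nilradical; the divH parabolic, having a genuine $2$-step nilradical, must delete two nodes, which the block picture forces to be the two ends of the $A_\ell$ (nilradical of the form $\mh_{\ell-1}(\mathbb A)$). The real simple Lie algebras with restricted root system $A_\ell$, $\ell\ge2$, are exactly $\msl(n,\RR)$, $\msl(n,\CC)$ regarded over $\RR$, $\msu^{*}(2n)$ with $n\ge3$, and $EIV\cong\me_{6(-26)}$ (where $\ell=2$). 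For the converse I would check that every other $\mg\in\mathfrak S$ has $|\Sigma|=1$: for restricted types $B$, $C$, $BC$, $F_4$, $G_2$, $E_7$, $E_8$ at most one simple root occurs with coefficient $1$ in the highest root, so $|\Sigma|=2$ is impossible; and in the two remaining types $D_\ell$ and $E_6$ one checks, as part of the classification underlying Theorem 1, that the divH parabolic is still the one-node contact parabolic (with Heisenberg nilradical), hence maximal. This yields the first assertion, equivalently $\ma_o=0$ outside the four listed families.

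Finally I would assemble the minimal list and flag the difficult part. The rank-one members of $\mathfrak S$ are precisely $\msu(1,n)$, $\msp(1,n)$ and $FII\cong\mf_{4(-20)}$ (recall $\mso(1,n)\notin\mathfrak S$), for which $\mpp(\mg)$ is the unique proper parabolic, hence simultaneously minimal and maximal; and among the non-maximal families just found, those of real rank $2$ are $\msl(3,\RR)$, $\msl(3,\CC)$, $\msu^{*}(6)$ and $EIV$, for which $|\Sigma|=2=\mathrm{rank}_{\RR}\mg$. Every other $\mg\in\mathfrak S$ has $|\Sigma|<\mathrm{rank}_{\RR}\mg$, so $\mpp(\mg)$ is not minimal, and this is exactly the list in the statement. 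The step I expect to be the main obstacle is the case-by-case bookkeeping underlying the third paragraph: matching each divH algebra $\mn(\mg)$ to its ambient $\mg$ and verifying $\ma_o$ and $\mathrm{rank}_{\RR}\mg$ — in particular, explaining why the two octonionic algebras sit differently ($\mh_1(\OO)$ being the nilradical of the minimal parabolic of the rank-two $EIV$, so that $\ma_o\neq0$, whereas $\mh'_{1,0}(\OO)$ is the nilradical of the unique proper, hence maximal, parabolic of the rank-one $FII$, so that $\ma_o=0$), and checking that no spurious split toral derivation vanishing on $\mz$ occurs in the $\mh'_{p,q}(\HH)$ family. Granted Table 1, this is a finite verification.
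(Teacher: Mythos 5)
Your proof is correct, and it is organized differently from the paper's. The paper treats this Proposition (like its neighbours) as a pure inspection of Table~1: one reads off $\dim\ma$ for each $\mg$, notes that $\mpp(\mg)$ is maximal exactly when $\dim\ma=1$ (equivalently $\ma_o=0$), and compares $|\Sigma|$ with the real rank for minimality. You instead extract two a priori structural facts before touching the classification: first, that the $2$-step non-abelian nilradical forces the highest restricted root to have $\Sigma$-height exactly $2$ and hence $|\Sigma|\le 2$; second, that $|\Sigma|=2$ requires two simple restricted roots of coefficient $1$ in the highest root, which pins the non-maximal cases to restricted type $A_\ell$ (after disposing of $D_\ell$ and $E_6$ via the uniqueness in Theorem~1(b) and Table~1). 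This buys a conceptual explanation of \emph{why} the exceptional list is exactly the restricted-type-$A$ members of $\mathfrak S$ and reduces the residual case-by-case work to the rank-one and rank-two members, whereas the paper's route is shorter but purely enumerative. Your remaining reliance on Table~1 for the $D_\ell$, $E_6$ cases and for the octonionic dichotomy ($\mh_1(\OO)$ inside the rank-two $EIV$ versus $\mh'_{1,0}(\OO)$ inside the rank-one $FII$) is exactly the data the paper itself invokes, so nothing is missing.
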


Even if $\mpp(\mg)$ is not minimal, it contains the following distinguished minimal one. First note that any reductive Lie algebra can be uniquely decomposed as
$ \mathfrak r= \mathfrak r'\oplus\mathfrak r''$ where $\mathfrak r'$ is semisimple with simple factors in $\mathfrak S$, and $\mathfrak r''$ is reductive with simple factors not in $\mathfrak S$.

\begin{Proposition}
	
	\item[(a)] If $\mg\in\mathfrak S$, then
	$\mm(\mg)'\in\mathfrak S$.
	\item[(b)] If $\mg\in\mathfrak S$ is classical, then
	$\mm(\mg)'$ is classical and of the same type as $\mg$.
	\item[(c)] If $\mn$ is  $divH$, then $\mathrm{Der_o}(\mn)'\in\mathfrak S$.
\end{Proposition}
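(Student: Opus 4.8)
The strategy is to extract the Langlands datum of $\mpp(\mg)$, in particular the Levi factor $\mm(\mg)$, from Table~1 (equivalently, from the case-by-case construction in the proof of Theorem~\ref{teo:main}), and to identify its semisimple part, with the understanding that $\mm(\mg)'$ is the trivial algebra in a few small cases. Two reductions organize the computation. First, by Proposition~1 one has $\mm(\mg)=\mm_o\oplus\mathfrak{spin}(\mn)$ with $\mathfrak{spin}(\mn)\cong\mso(\mz)$ compact, so $\mm(\mg)'=\mm_o{}'$ and only the centralizer $\mm_o$ of the centre enters. Second, and this is the geometric content of (b), for classical $\mg$ the divH parabolic $\mpp(\mg)$ is the stabilizer of a minimal isotropic flag: an isotropic line when $\mg$ is $\msu(p,q)$, $\msp(p,q)$, $\msp(2n,\RR)$, $\msp(2n,\CC)$ or $\mso^*(2n)$; an isotropic $2$-plane when $\mg$ is $\mso(p,q)$; and the formless flag $1\subset n-1\subset n$ when $\mg$ is $\msl(n,\RR)$, $\msl(n,\CC)$ or $\msu^*(2n)$.

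With such a flag, $\mm_o$ decomposes as a sum of general-linear factors acting on the flag pieces and the isometry algebra of the induced non-degenerate quotient space. The general-linear factors contribute only copies of $\mathfrak{gl}_1(\RR)$, $\mathfrak{gl}_1(\CC)$, $\mathfrak{gl}_1(\HH)$ and at most one $\msl(2,\RR)$, each of which is abelian, compact, or isomorphic to some $\mso(1,k)$, so they disappear on passing to the $\mathfrak S$-semisimple part; what survives is a classical algebra of exactly the same type as $\mg$ on a space of smaller rank, for instance $\msu(p-1,q-1)$ for $\msu(p,q)$, $\mso(p-2,q-2)$ for $\mso(p,q)$, $\msl(n-2,\RR)$ for $\msl(n,\RR)$, and likewise for the others. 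This gives (b), hence the classical half of (a). For the exceptional members of $\mathfrak S$ one inspects the finitely many rows of Table~1: in each, the semisimple part of the listed $\mm_o$ has at most one ideal that is neither compact nor an $\mso(1,k)$, and that ideal again lies in $\mathfrak S$. The one thing to watch is the list of low-rank coincidences $\msl(2,\RR)\cong\mso(1,2)$, $\msl(2,\CC)\cong\mso(1,3)$, $\msu^*(4)\cong\mso(1,5)$ and $\msp(1,1)\cong\mso(1,4)$, which are excluded from $\mathfrak S$; in the small cases where, after discarding the compact and $\mso(1,k)$ ideals, nothing remains, $\mm(\mg)'=0$.

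For (c) I would not repeat this analysis but reduce it to (a). Given a divH algebra $\mn$, let $\mg=\mg(\mn,\mathrm{Der}_{\mathrm{gr}}(\mn))$ be its full Tanaka prolongation; by Theorem~\ref{teo:main}(d) this is non-trivial, and one verifies (from the classification, equivalently from Table~1) that it is a simple algebra in $\mathfrak S$ with $\mn(\mg)=\mn$ and $\mg^0=\mathrm{Der}_{\mathrm{gr}}(\mn)$. Now $\mathrm{Der}_o(\mn)$ is the subalgebra of $\mathrm{Der}_{\mathrm{gr}}(\mn)=\mg^0=\mm_o\oplus\mathfrak{spin}(\mn)\oplus\ma$ acting trivially on $\mz$; since $\mathfrak{spin}(\mn)\cong\mso(\mz)$ acts faithfully on $\mz$ (and vanishes when $\dim\mz=1$) and the grading element scales $\mz$, this subalgebra is $\mm_o\oplus\ma_o$, an abelian extension of $\mm_o=\mm_o(\mg)$. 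Hence $\mathrm{Der}_o(\mn)'=\mm_o(\mg)'=\mm(\mg)'$, which lies in $\mathfrak S$ by (a). The main obstacle throughout is not any individual hard step but the uniform organization of the case analysis, essentially the content already packaged into Table~1, together with, for (c), the verification that the full prolongation $\mg(\mn,\mathrm{Der}_{\mathrm{gr}}(\mn))$ is simple and carries all of $\mathrm{Der}_{\mathrm{gr}}(\mn)$ in degree zero; granting those, the three assertions reduce to the finite checks indicated above.
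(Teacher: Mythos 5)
Your proposal is correct and, at bottom, takes the same route as the paper, whose entire proof is ``By inspection of Table 1'': your flag-stabilizer description of the classical parabolics and the reduction of (c) to (a) via the prolongation are just organized ways of reading off the Levi factors that the table already records. The extra care you take with the compact $\mathfrak{spin}(\mn)$ factor and the low-rank coincidences $\msl(2,\RR)\cong\mso(1,2)$, $\msl(2,\CC)\cong\mso(1,3)$, $\msu^{*}(4)\cong\mso(1,5)$, $\msp(1,1)\cong\mso(1,4)$ is exactly what the inspection requires, so nothing is missing.
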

\begin{proof}
	By inspection of Table 1
\end{proof}

One obtains a filtration
$$\mg=\mg^0\supset\mg^{-1}\supset\ldots\supset\mg^{-k}$$
of Lie subalgebras
all of  class $\mathfrak S$, with corresponding $divH$-nilradicals $\mn(\mg^{-i})$, such that $ \mg^{-i-1} =   \mm(\mg^{-i})'$.

\begin{Proposition}
	$\mpp(\mg^{-k})$ is a minimal parabolic subalgebra of $\mg$, and $\bigoplus_{i=0}^k\mn(\mg^{-i})$ is a maximal nilpotent one.
\end{Proposition}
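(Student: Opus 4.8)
The plan is to exhibit the chain $\mg=\mg^0\supset\mg^{-1}\supset\cdots\supset\mg^{-k}$ produced just above and argue that it terminates precisely when the parabolic has become minimal, with the accumulated nilradicals assembling into a maximal nilpotent subalgebra. First I would set up the descent: by the previous proposition, $\mg^{-i-1}=\mm(\mg^{-i})'$ is again in $\mathfrak S$, so the construction is well-defined as long as $\mm(\mg^{-i})'\neq 0$. I would then observe that the rank strictly drops at each step. Concretely, using the Langlands decomposition $\mpp(\mg^{-i})=(\mm_o\oplus\ma_o)\oplus\mathfrak{spin}(\mn)\oplus(\ma_\delta\oplus\mn)$ from the Proposition in \S2, the split rank of $\mm(\mg^{-i})'$ is strictly smaller than that of $\mg^{-i}$, because $\ma_\delta=\RR\delta$ (and, when present, $\ma_o$) is split off and lies outside the semisimple part $\mm(\mg^{-i})'$; hence $\mathrm{rank}_{\RR}\mg^{-i-1}<\mathrm{rank}_{\RR}\mg^{-i}$. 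Therefore the chain must stop after finitely many steps, at the first $\mg^{-k}$ for which the descent can go no further.

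Next I would identify what "can go no further'' means: the process halts exactly when $\mpp(\mg^{-k})$ is a minimal parabolic of $\mg^{-k}$. Indeed, if $\mpp(\mg^{-k})$ were not minimal, then by the earlier Proposition characterizing when $\mpp(\mg)$ is minimal, $\mg^{-k}$ would be one of the algebras there, and more to the point $\mm(\mg^{-k})$ would still contain a noncompact simple factor — equivalently $\mm(\mg^{-k})'\neq 0$ — so another descent step would be available. Conversely if $\mpp(\mg^{-k})$ is minimal then $\mm(\mg^{-k})$ is compact plus center, so $\mm(\mg^{-k})'=0$ and the chain genuinely terminates. One must be slightly careful that "minimal parabolic of $\mg^{-k}$'' upgrades to "minimal parabolic of $\mg$'': this follows because each $\mpp(\mg^{-i})$ arises as the parabolic of $\mg^{-i}$ inside the Levi of $\mpp(\mg^{-i-1})$... sorry, inside the next larger algebra, so iterating, $\mpp(\mg^{-k})$ together with all the intermediate $\ma_\delta$'s and $\ma_o$'s and the intermediate nilradicals builds up a parabolic of $\mg$ whose Levi is $\mm(\mg^{-k})$ modulo its center — which is compact, hence the total parabolic is minimal in $\mg$.

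Finally, for the nilradical statement I would show $\mathfrak q:=\bigoplus_{i=0}^k\mn(\mg^{-i})$ is the nilradical of exactly that minimal parabolic of $\mg$, hence a maximal nilpotent subalgebra. The key point is that $\mn(\mg^{-i-1})\subset\mm(\mg^{-i})$ and the bracket relations of the grading make the sum a subalgebra: $\mn(\mg^{-i})$ is an ideal in $\mpp(\mg^{-i})$, so $[\mn(\mg^{-i-1}),\mn(\mg^{-i})]\subset\mn(\mg^{-i})$, and one checks inductively that $\mathfrak q$ is closed under bracket and consists of ad-nilpotent elements. A dimension count then finishes it: $\dim\mathfrak q=\sum_i\dim\mn(\mg^{-i})$ must equal the number of positive roots of $\mg$ relative to a maximal split torus, which one verifies from Table 1 (the split-rank of $\mg^{-i}$ decreases by the right amount and the $\mn(\mg^{-i})$ dimensions add up). Alternatively, and more cleanly, one notes that the minimal parabolic of $\mg$ constructed in the previous paragraph has nilradical whose graded pieces are exactly accounted for by the $\mn(\mg^{-i})$ together with the one-dimensional $\ma$-root contributions being absorbed, so $\mathfrak q$ is that nilradical on the nose, hence maximal nilpotent.

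The main obstacle I expect is the bookkeeping in the last step — precisely matching $\bigoplus_i\mn(\mg^{-i})$ with the full nilradical of the minimal parabolic, i.e. checking that no root spaces are left out and none double-counted as the split torus grows along the chain. This is where one genuinely has to use the explicit structure in Table 1 (the spin/half-spin decomposition of $\mathfrak{spin}(\mn)$ acting on $\mv$, and the sizes of $\mm_o$), rather than a purely formal argument; everything else — termination, rank drop, subalgebra property — is soft.
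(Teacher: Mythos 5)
Your argument has a genuine gap at the step where you declare that $\mpp(\mg^{-k})$ failing to be minimal is ``equivalent'' to $\mm(\mg^{-k})'\neq 0$. The decomposition $\mathfrak r=\mathfrak r'\oplus\mathfrak r''$ puts into $\mathfrak r''$ not only the compact factors and the centre but also every noncompact simple factor isomorphic to some $\mso(1,n)$, since those are excluded from $\mathfrak S$ --- and such factors do arise along the chain. For $\mg=\mso(p,q)$ each Levi $\mm(\mg^{-i})=\msl(2,\RR)\oplus\mso(p-2i-2,q-2i-2)$ sheds an $\msl(2,\RR)\cong\mso(1,2)$ into $\mm''$ at every step; for $\mg=\msl(4,\RR)$ the chain stops immediately at $k=0$ because $\mm(\mg)=\msl(2,\RR)$ is noncompact yet has $\mm'=0$. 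In that last case $\mpp(\mg^{-k})=\mpp(\msl(4,\RR))$ is \emph{not} minimal (consistently with the earlier Proposition, whose list of minimal cases does not contain $\msl(4,\RR)$), and $\bigoplus_{i}\mn(\mg^{-i})=\mh_2(\RR)$ has dimension $5$, whereas a maximal nilpotent subalgebra of $\msl(4,\RR)$ (the strictly upper triangular matrices) has dimension $6$. So the terminal Levi need not be compact plus centre, your minimality conclusion does not follow, and the dimension count in your last paragraph cannot close: the accumulated nilradical falls short by exactly the abelian minimal-parabolic nilradicals of the leftover $\mso(1,n)$-type factors ($\msl(2,\RR)$, $\msl(2,\CC)\cong\mso(1,3)$, $\msu^*(4)\cong\mso(1,5)$, \dots).

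This is not a presentational slip but the one point where the statement genuinely requires care, and it cannot be repaired by the soft reasoning you propose. Note that the paper itself supplies no proof here; the neighbouring propositions are all settled ``by inspection of Table 1,'' and any honest treatment of this one must likewise confront, case by case from the table, which $\mg$ produce $\mso(1,n)$-type factors in the Levi chain --- either verifying that none occur for the algebra at hand (as happens for $\msu(p,q)$, $\msp(p,q)$, $\msp(n,\RR)$, $\msl(2m+1,\RR)$, etc.) or adjoining the abelian nilradicals of those leftover factors before claiming maximality. Your remaining steps --- termination of the chain via the rank drop, the standard correspondence between parabolics of $\mg$ contained in $\mpp$ and parabolics of its Levi, and the verification that $\bigoplus_i\mn(\mg^{-i})$ is a subalgebra of ad-nilpotent elements --- are sound, but they all feed into the false equivalence above, so the proof as written does not establish the Proposition.
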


It follows that every classical $\mg\in\mathfrak S$ fits into a strictly increasing filtration of algebras
$$0\subset\mathfrak g^{-k}\subset \ldots\subset \mathfrak g^{-1}\subset \mathfrak g \subset \mathfrak g^{1} \subset  \ldots$$
 of simple algebras of the same simple type and satisfying
$\mg^{i-1} = (\mg^{i})'$.

\begin{Remark} The $\bigoplus_{i=0}^j\mn(\mg^{-i})$ are essentially Howe's H-tower algebras.
\end{Remark}

\

{\bf 3. Parabolic Geometries}

\

Among the distributions with symbol of type H,  those with symbol of type $divH$ have compact Klein models. More precisely, let $\mathcal V$ be the canonical distribution on  a group $N$ of type $divH$, and choose a simple $G$ and a parabolic $P$ with $N$ as nilradical. The tangent space to $G/P$ at the origin can be identified with $\bar \mn=\bar \mv\oplus\bar \mz$, and $P$ respects this grading. Let $\bar{\mathcal V}$ be the $G$-invariant distribution on $G/P$ determined by $\bar\mv$. Therefore

\begin{Proposition} $G/P$ carries a $G$-invariant distribution locally equivalent to $\mathcal V$.\end{Proposition}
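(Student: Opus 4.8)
The existence of $\bar{\mathcal V}$ is part of the set-up: $P$ preserves the filtration of $\bar\mn=\mg/\mpp$, so $\bar\mv$ is a $P$-submodule and hence defines a $G$-invariant distribution. The content of the proposition is thus the \emph{local equivalence} of $\bar{\mathcal V}$ with the canonical distribution $\mathcal V$ of $N$. The plan is to produce this equivalence on the big Bruhat cell of $G/P$ and then propagate it by the transitive $G$-action.

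First I would invoke the standard big-cell picture. Fix the Cartan involution $\theta$ of $\mg$ with $\theta\mg^{-i}=\mg^i$ appearing in \eqref{eq:prol}, let $\bar\mpp=\theta\mpp$ be the opposite parabolic and $\bar N=\exp\bar\mn$ its unipotent radical. The orbit map $\iota\colon\bar N\to G/P$, $\bar n\mapsto\bar n\cdot o$ with $o=eP$, is an open embedding onto a dense open set $\Omega$: it is an injective immersion because $\bar\mn\cap\mpp=0$, its image is open since $\dim\bar N=\dim G/P$, and density of $\Omega$ is the usual Bruhat fact. Moreover $d\iota_e$ is the canonical identification $\bar\mn=T_e\bar N\xrightarrow{\ \sim\ }\mg/\mpp=T_o(G/P)$, and it carries $\bar\mv$ onto the fiber $\bar{\mathcal V}_o$.

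Next I would carry $\bar{\mathcal V}$ back through $\iota$. As $\bar{\mathcal V}$ is $G$-invariant it is $\bar N$-invariant, and $\iota$ intertwines left translations on $\bar N$ with the $\bar N$-action on $\Omega$; hence $\iota^{*}(\bar{\mathcal V}|_\Omega)$ is the left-invariant distribution on $\bar N$ with fiber $\bar\mv$ at $e$ --- the canonical distribution of the group $\bar N$. Since $\theta$ is an automorphism of $\mg$ interchanging $\mg^i$ and $\mg^{-i}$, its restriction $\theta|_\mn\colon\mn\xrightarrow{\ \sim\ }\bar\mn$ is an isomorphism of graded nilpotent Lie algebras taking $\mv$ onto $\bar\mv$; equivalently, $\bar\mn$ is the $divH$ nilradical of $\bar\mpp$, which by Theorem~\ref{teo:main}(c) is the same algebra of type $divH$ with matching $\mv$. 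Exponentiating, the induced isomorphism $N\xrightarrow{\ \sim\ }\bar N$ carries $\mathcal V$ onto the canonical distribution of $\bar N$, and composing with $\iota$ gives a diffeomorphism $N\xrightarrow{\ \sim\ }\Omega\subset G/P$ taking $\mathcal V$ onto $\bar{\mathcal V}|_\Omega$.

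Finally I would globalize. Since $G$ acts transitively on $G/P$ by symmetries of $\bar{\mathcal V}$ and $\Omega$ is nonempty and open, the translates $g\Omega$ ($g\in G$) cover $G/P$, and on each of them $g^{-1}$ is an equivalence onto $(\Omega,\bar{\mathcal V}|_\Omega)\cong(N,\mathcal V)$; as $N$ acts transitively on itself preserving $\mathcal V$, this is exactly the asserted local equivalence. I do not expect a real obstacle here: conceptually the proposition just says that $(G/P,\bar{\mathcal V})$ is the flat (Klein) model of the parabolic geometry of type $(G,P)$, whose underlying bracket-generating distribution is modeled on the negative part $\bar\mn\cong\mn$ of the grading. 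The two points that do need care are that the big-cell identification be grading-compatible --- so that $\bar{\mathcal V}$ is matched with $\mathcal V$ and not merely with some distribution of the same rank --- and that the opposite nilradical be again of type $divH$ with the matching $\mv$; both follow at once from the grading-reversing Cartan involution built into \eqref{eq:prol}.
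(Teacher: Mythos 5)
Your argument is correct and is exactly the route the paper intends: the paper's ``proof'' is just the two sentences preceding the Proposition (identify $T_o(G/P)$ with $\bar\mn=\bar\mv\oplus\bar\mz$, note $P$ respects the grading, and conclude), and your write-up simply supplies the standard details --- the open dense $\bar N$-orbit, the grading-reversing Cartan involution identifying $\mn$ with $\bar\mn$, and propagation by $G$-transitivity. No discrepancy with the paper's approach.
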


Consider now a parabolic geometry of type $divH$ on a manifold $M$, i.e., a Cartan geometry of type $(G,P)$ where $\mg=Lie(G)\in\mathfrak S$ and $P\subset G$ is a parabolic subgroup with unipotent radical of type $divH$. Let $\omega$ be its Cartan connection and $\kappa$ its curvature. Together with the gradings
$$\mg=\mg_{-2}\oplus  \mg_{-1}\oplus \mg_{0}\oplus \mg_{1}\oplus\mg_{2}$$
$$ \mpp=\mg_{0}\oplus \mg_{1}\oplus\mg_{2}  \qquad \mn=\mg_{-2}\oplus \mg_{-1},$$
$\omega$ determines a distribution
$\mathcal D$ on $M$ and a principal $G_{0}$-bundle, where $G_{0}$ is the subgroup of $P$ that preserves the grading of $\mg$. $G_{0}$ is isomorphic to a subgroup of $\mathrm{Aut}_{gr}(\mn)$ and $Lie(G_{0})=\mg_{0}$.

$\omega$ is called \emph{regular} when $\mathcal D$ has constant symbol isomorphic to $\mn$ and the principal $G_{0}$-bundle is a reduction of the canonical $\mathrm{Aut}_{gr}(\mn)$-bundle. When $G_{0}=\mathrm{Aut}_{gr}(\mn)$, we have just a distribution of constant symbol.
$\omega$ is called \emph{normal} if it satisfies $\partial^* \kappa=0$ where $\partial^*$ is the Kostant codifferential. This condition assures the uniqueness of the Cartan connection.

Since a distribution is fat if and only if its symbol is non-singular, Theorem 1 (a) implies

\begin{Theorem} The regular normal parabolic geometries supported on fat distributions are exactly those of $divH$ type.
\end{Theorem}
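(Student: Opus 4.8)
The plan is to prove the two implications separately; the substance lies in the forward one, the converse being immediate. I would use two standard facts about regular normal parabolic geometries (Morimoto; \v{C}ap--Slov\'{a}k). First, the underlying distribution $\mathcal D$ of a regular parabolic geometry of type $(G,P)$ has constant symbol algebra $\mathrm{gr}(\mathcal D)\cong\mn$, the nilradical of $\mpp=\mathrm{Lie}(P)$ with its grading. Second, conversely, any distribution whose constant symbol is the nilradical of a parabolic subalgebra underlies an essentially unique regular normal parabolic geometry, normality singling out the Cartan connection by $\partial^{*}\kappa=0$. Granting these, the theorem reduces to the algebraic assertion that the non-singular nilradicals of parabolic subalgebras of real semisimple Lie algebras are exactly the algebras of type $divH$, which is Theorem~\ref{teo:main} together with the reductions below.

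For the forward implication I would start from a regular normal parabolic geometry $(G,P,\omega)$ whose underlying distribution $\mathcal D$ is fat. By the criterion recalled just above the theorem, $\mn\cong\mathrm{gr}(\mathcal D)$ is a non-singular $2$-step nilpotent algebra; in particular it is non-abelian and $\mg$ is $|2|$-graded, $\mn=\mg_{-2}\oplus\mg_{-1}$ with centre $\mg_{-2}\neq0$. The first step is to reduce to $\mg$ simple. A non-zero non-singular $2$-step nilpotent Lie algebra is directly indecomposable (in a decomposition $\mn=\mn'\oplus\mn''$ with both summands non-zero, brackets of any element stay inside a single summand, so they cannot surject onto the full centre $\mg_{-2}$ unless one summand is abelian, and an abelian summand would be a non-zero central ideal inside $\mg_{-1}$, also impossible). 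Hence all but one simple ideal of $\mg$ contributes the improper parabolic to $\mpp$; those ideals act trivially and may be removed, so we may take $\mg$ simple. Then $\mg$ is non-compact, having a proper non-zero parabolic, and not isomorphic to $\mso(1,n)$, whose nilradicals are abelian and hence singular. Now Theorem~\ref{teo:main}(a),(c) forces $\mn$ to be of type $divH$ and Theorem~\ref{teo:main}(b) forces $\mpp$ to be conjugate to the distinguished $divH$ parabolic of $\mg$; therefore $\mg\in\mathfrak S$ and $P$ has unipotent radical of type $divH$, i.e.\ the geometry is of $divH$ type.

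For the converse I would argue that if $(G,P,\omega)$ is regular normal of $divH$ type then, by definition, $\mg\in\mathfrak S$ and the unipotent radical $\mn$ of $P$ is of type $divH$; by Theorem~\ref{teo:main}(c) such an $\mn$ is a non-singular nilradical, and by regularity $\mathrm{gr}(\mathcal D)\cong\mn$, so the criterion above shows $\mathcal D$ is fat.

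The main obstacle is the appeal to Theorem~\ref{teo:main} in the forward direction: one needs not merely that a non-singular nilradical of a parabolic is a $2$-step algebra of Heisenberg type with non-trivial prolongation, but the full classification identifying these with the $divH$ list and establishing uniqueness of the parabolic up to conjugacy --- precisely the content of Section~1. Nothing beyond the standard equivalence of categories is required on the parabolic-geometry side, so once that classification is invoked the argument is complete with no further computation.
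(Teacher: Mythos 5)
Your argument is correct and follows essentially the same route as the paper, whose entire justification is the one-line observation that a distribution is fat if and only if its symbol is non-singular, combined with Theorem~\ref{teo:main}. You simply make explicit the standard equivalence between regular normal parabolic geometries and distributions with prescribed symbol, and add the (valid, and glossed over in the paper) reduction to simple $\mg$ via indecomposability of non-singular symbols.
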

\noindent In fact,

(a) Distributions with symbol $\mathfrak h_{n}(\mathbb R)$ are associated to contact parabolic geometries: Lagrangean, partially integrable almost CR, Lie contact, contact projective, and exotic contact structures \cite{CS}.

(b)Distributions with symbol $\mathfrak h_{n}(\mathbb C)$ are associated to the complex contact structures of Boothby \cite{Bo} and, more generally, to partially integrable almost CR-structures of CR-codimension $2$ with additional structure. These have not received much attention except for special cases \cite{CSc}.
			
(c) For the cases $\mg = \msp(n, \mathbb{R})$, $\msp(n, \mathbb{C})$, $\mg$ is not the prolongation of $(\mn,\mg_{0})$, so the underlying structure they determine on the manifold is just a real or complex contact structure with the canonical $\mathrm{Aut}_{gr}(\mn)$-bundle. To characterize this parabolic geometries we have to consider finer underlying structures, in this case are a contact projective structure, i.e. a contact projective equivalence class of partial contact connections \cite{CS}.
			
(d) For all the other $divH$ algebras the parabolic geometry is determined by the distribution alone, with no additional structure (Proposition 4.3.1 in \cite{CS}). Quaternionic and octonionic contact structures associated to $\mh_{p,q}'(\HH)$ and $\mh_{1,0}'(\OO)$ have been the subject of interest \cite{Bi, CS}.
			%As to $\mathfrak h_{n}(\HH)$ and $\mathfrak h_{1}(\OO)$, we found no mention of them.
			
(e) Distributions whose symbol is $\mathfrak h_{1}(\mathbb O)$ or $\mathfrak h_{1,0}'(\mathbb O)$ are locally isomorphic to the flat model. This is a consequence of the fact
that the second generalized Spencer cohomology groups vanish in these cases \cite{Y}.

\

{\bf 1. Conformal infinity of harmonic spaces}

\

Let $N$ be a group of type H, $A=\exp(\RR\delta)$ the group of dilations and $S=AN$ their semidirect product. Endowing $\mn$ with a compatible  metric induces a left-invariant riemannian metric $g$ on $S$, called a Damek-Ricci metric \cite{R}.
$S$ is harmonic - hence Einstein, and any  homogeneous non-compact harmonic space is isometric to $S$ for some $N$ of type H \cite{Heb1}. One is interested in the asymptotic behavior of the metric $g$.

 If $S$ is hyperbolic, the metric in polar form satisfies
\begin{equation}\label{eq:met}
g= dt^2 + e^t\gamma+e^{2t}\delta + o(t)
\end{equation}
for $t\to\infty$, where $(\gamma,\mathcal D^{\delta})$ is a generalized G-conformal structure in the sense of Biquard-Mazzeo \cite{BM} on  the geodesic boundary of $S$, which for the hyperbolic space is a sphere.

For the general $S$ no such formula seems to exist (cf. for example \cite{C}) -  unless $\mn$ is of type $divH$. For the first statement, consider the Poincar\'e-like realization of $S$ in euclidean unit ball $\mathbb B$ of the same dimension, as well as the Siegel-like one on
$$\mathcal U = \{( X,Z,t)\in \mathfrak{v\times z\times \RR}:\ \ t>\frac{1}{4}|X|^2\}.$$
The Cayley transform $\mathcal C:\mathcal U\to\mathbb B$
$$\mathcal C(X,Z,t)= \frac{1}{(1+t)^2+|Z|^2}\big( (1+t-J_{Z})X, 2Z, -1+t^2-|Z|^2\big)$$
is a diffeomorphism. It extends to the boundary of $\mathcal U$ in $v\times z\times \RR$,
 $\partial\mathcal U = \{(X,Z,\frac{1}{4}|X|^2): X\in\mathfrak v,\, Z\in \mathfrak z\}$
giving a diffeomorphism
$$\mathcal C_{\partial}:\partial\mathcal U\to\mathbb S^*$$
onto the punctured sphere.
$N$ acts simply transitively on $\partial\mathcal U$, hence on $\mathbb S^*$, and its  canonical distribution  induces invariant distributions on these boundaries. Writing
$$T_{(X ,Z ,\frac{1}{4}|X |^2)}(\partial\mathcal U) =\{(2Y,W, <X ,Y>): Y\in \mathfrak v,\  W\in \mathfrak z\},$$
the distribution is given by
$${\mathcal D}_{(X,Z,\frac{1}{4}|X|^2)}^{\partial\mathcal U} =  \{(Y, \frac{1}{2}[X,Y],\frac{1}{2}<X,Y>): Y\in \mathfrak v\}.$$
Let now ${\mathcal D}^{\mathbb S^*}= d\mathcal C_{\partial}({\mathcal D}^{\partial\mathcal U})$ and let $\infty$ denote the puncture of $\mathbb S^*$.

\begin{Proposition}
	${\mathcal D}^{\mathbb S^*}$ extends smoothly over $\infty$ if and only if $S$ is a hyperbolic space.
\end{Proposition}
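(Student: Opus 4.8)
The point is to make the puncture explicit and read off the obstruction from the Cayley formula. Parametrize $\mathbb S^{*}$ by $\mathfrak v\times\mathfrak z$ through $c(X,Z):=\mathcal C\big(X,Z,\tfrac14|X|^{2}\big)\in\mathbb S\subset\mathfrak v\times\mathfrak z\times\RR$: under this identification $\infty$ is the ``point at infinity'' of $\mathfrak v\times\mathfrak z$, and, by the formulas in the text, $\mathcal D^{\mathbb S^{*}}_{c(X,Z)}=dc_{(X,Z)}\big(\{(Y,\tfrac12[X,Y],\tfrac12\langle X,Y\rangle):Y\in\mathfrak v\}\big)$. Fix once and for all a genuine smooth chart of $\mathbb S=\partial\mathbb B$ around $\infty$ --- for instance a euclidean inversion of $\mathfrak v\times\mathfrak z\times\RR$ carrying $\infty$ to the origin. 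The plan is: (i) transport $c$ and the frame $\{H_i\}$ for $\mathcal D^{\partial\mathcal U}$ through this chart and through the inversion $(X,Z)\mapsto(X,Z)/(|X|^{2}+|Z|^{2})$ of the source, obtaining an explicit rational frame for $\mathcal D^{\mathbb S^{*}}$ in a punctured neighbourhood of the origin; (ii) decide when that frame extends smoothly across the origin; (iii) match the answer against the classification of $divH$ algebras. The dilations $A$ fix $\infty$ and act on $\mathbb S$, so any putative smooth extension is $A$-invariant and it suffices to control the leading homogeneous part.

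For steps (i)--(ii), the decisive feature is the Clifford term $J_{Z}X$ in the numerator $(1+t-J_{Z})X$ of $\mathcal C$ with $t=\tfrac14|X|^{2}$. Writing $(X,Z)=\rho^{-2}(x,z)$ with $\rho^{2}=|x|^{2}+|z|^{2}$, the Cayley denominator $(1+t)^{2}+|Z|^{2}$ has a definite homogeneity in $\rho$ and all powers of $\rho$ cancel except that $J_{Z}X=\rho^{-4}J_{z}x$ pairs against the lower-order piece $(1+t)X=(1+\tfrac14\rho^{-2}|x|^{2})\rho^{-2}x$. One is left, for the $\mathfrak z$-components of the transported frame, with ratios whose denominators are powers of the quadratic form $|x|^{2}+\tfrac14|z|^{2}$ (up to higher order) and whose numerators are built from the vectors $J_{z}x$ and their brackets with $\mathfrak v$. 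Such a ratio extends smoothly across the origin exactly when the numerator is divisible by that power of the quadratic form; carried out for all $Y$ and all $z$, this divisibility holds if and only if every product $J_{z}J_{z'}$ lies in $\RR\,\mathrm{Id}+\{J_{w}:w\in\mathfrak z\}$, i.e.\ if and only if $\{\mathrm{Id}\}\cup\{J_{z}:z\in\mathfrak z\}$ spans a (necessarily normed, hence $\CC$, $\HH$ or $\OO$) subalgebra of $\mathrm{End}(\mathfrak v)$. This is precisely the condition that $\mathfrak z\cong\Im\mathbb A$ and $\mathfrak v$ is an $\mathbb A$-module, equivalently that $\mn$ is a real, quaternionic or octonionic Heisenberg algebra ($\mathfrak h_{n}(\RR)$, $\mathfrak h'_{p,0}(\HH)$, $\mathfrak h'_{1,0}(\OO)$, or $\mathfrak z=0$), equivalently that $S=\RR_{+}N$ is $\CC H^{n+1}$, $\HH H^{p+1}$, $\OO H^{2}$ (or $\RR H^{n}$) --- the rank one symmetric spaces.

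The ``if'' direction also admits a computation-free argument: when $S$ is such a symmetric space, $N$ is the nilradical of the minimal parabolic $P$ of the corresponding rank one simple $G$, the sphere $\mathbb S$ is $G/P$, and by the Proposition above $G/P$ already carries a global $G$-invariant distribution locally equivalent to $\mathcal V$; being the unique $N$-invariant extension of $\mathcal V$ at the base point it agrees with $\mathcal D^{\mathbb S^{*}}$ on $\mathbb S^{*}$ and hence extends smoothly over $\infty$. So the substance is the ``only if'' direction, for which it is enough --- once $\mn$ is $divH$ but not on the list above --- to exhibit one transported frame coefficient, say a $\mathfrak z$-component along a direction $z$ with $J_{z}J_{z'}\notin\RR\,\mathrm{Id}+J_{\mathfrak z}$, having a non-removable pole at the origin.

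I expect the main obstacle to be the bookkeeping in steps (i)--(ii): disentangling how the Clifford maps $J_{Z}$ interact with the nonlinear Cayley denominator, the source inversion and the target chart, and extracting the divisibility criterion sharply enough to recognise it as the (classically known) algebraic characterisation of the symmetric Damek--Ricci spaces. A convenient safeguard is to use the $A$-homogeneity noted above to reduce everything to the leading term, which turns the question into one purely about the bilinear data $(X,Z)\mapsto J_{Z}X$.
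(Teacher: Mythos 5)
Your ``if'' direction is fine and is essentially the paper's (the paper invokes transitivity of $K$ on $\mathbb S$ preserving the invariant distribution; your route through $G/P$ is the same computation-free idea). You have also correctly located the relevant dichotomy: the paper's ``only if'' argument indeed hinges on the $J^2$ condition of Cowling--Dooley--Kor\'anyi--Ricci, whose failure for non-symmetric $S$ is exactly what you isolate as $J_zJ_{z'}\notin\RR\,\mathrm{Id}+J_{\mathfrak z}$.

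The ``only if'' direction, however, has a genuine gap, in two respects. First, the decisive step is only announced, not performed: you assert that the transported frame coefficients are ratios whose smooth extendability is equivalent to a divisibility property, and that this divisibility is equivalent to the $J^2$ condition, but you explicitly defer the computation that would establish either equivalence (``I expect the main obstacle to be the bookkeeping''). That computation \emph{is} the proof. Second, and more seriously, your criterion in step (ii) is not the right one: non-extendability of a particular rational frame for $\mathcal D^{\mathbb S^*}$ does not imply non-extendability of the distribution, since a frame can degenerate or blow up while the plane it spans still converges in the Grassmannian. To conclude, you must exhibit incompatible limits of the \emph{plane field} along different approaches to $\infty$. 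This is precisely how the paper argues: from the failure of $J^2$ it extracts a unitary triple with $[X,J_ZJ_WX]=0$, uses two embedded copies of $\mathfrak h_1(\RR)$ (spanned by $X, J_ZX, Z$ and by $J_ZJ_WX, J_WX, Z$) to show that any limit of $\mathcal D^{\mathbb S^*}$ at $\infty$ would have to be $(\mathfrak v,0,0)$, and then produces a horizontal vector along the curve $1+|Z|^2=\tfrac1{16}|X|^4$ whose limiting direction is $(0,\RR W,0)$ --- a contradiction in the Grassmannian, independent of any choice of frame. To repair your sketch you would need either to carry out the divisibility analysis at the level of the induced map to the Grassmannian, or to replace it with an argument of the paper's type exhibiting two inconsistent limits.
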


\begin{proof}
	If $S$ is a hyperbolic space $G/K$, $K$ is transitive on $\mathbb S$ and leaves invariant the distribution, hence it can have no singularities.
	
	Otherwise, $N$ does not satisfy the $J^2$ condition of \cite{CDKR}. This implies that there is a unitary triple $X\in\mv$, $Z,W\in\mz$ such that $[X,J_{Z}J_{W}X]=0$.
%Coordinatize $\mathbb S^*$ by $(X,Z)$ such that $\exp(X+Z)\cdot \{-\infty\}$.
The vector fields  on $\partial\mathcal U\cong\mv\times\mz$
$$(v_1)_{(X,Z)}=(X, 0,\frac{1}{2}|X|),\qquad (v_2)_{(X,Z)}=(J_{Z}X, \frac{1}{2}|X|Z,0)$$
	correspond to the copy of $\mh_1(\RR)$ spanned by the triple $X,J_{Z} X, Z$. On $\mathbb S^*$ and along the  orbit $\exp(\mh_1(\RR))\cdot (-\infty)$,   the plane spanned by $d\mathcal C(v_1)$,  $d\mathcal C(v_2)$,  is horizontal and has a limit as $|X|,|Z|\to\infty$, namely the plane $(\RR X\oplus\RR J_{Z} X,0,0)$. Doing the same with the copy of  $\mh_1(\RR)$ spanned by $J_{Z}J_{W}X, J_{W}  X, Z$, the corresponding limiting plane is $(\RR J_{W}X\oplus\RR J_{Z}J_{W} X,0,0)$. Therefore, if the distribution extends, its value at $\infty$ must be $(\mv,0,0)$.
	On the other hand, the vector
	%$$ \frac{1}{|X||W|}(1+ |Z|^2-\frac{1}{16}|X|^4 ) J_{W}X, \frac{1}{|W|}(1+\frac{1}{4}|X|^2)|X|W,0)$$
		$$ ((1+ |Z|^2-\frac{1}{16}|X|^4 ) J_{W}X, (1+\frac{1}{4}|X|^2)|X|^2W,0)$$
is horizontal along the curve $1+ |Z|^2=\frac{1}{16} |X|^4$, where it spans line $(0,\RR W,0)$, which is a contradiction.
\end{proof}

If $N$ is of type $divH$ however, the $S$-orbit of any point gives an isometric embedding into the associated symmetric space
$$S\hookrightarrow G/K.$$
Denoting  by $\partial S$ the boundary of $S$ in an appropriate compactification of $G/K$, the natural projection
$$\pi: \partial S\to \mathbb S$$
onto the geodesic spherical boundary resolves the singularities of $\mathcal D^{\mathbb S^*}$.

As a consequence, (\ref{eq:met})  holds in this case.
Indeed such formula seems to characterize the $divH$ among  harmonic spaces. The non-hyperbolic ones are those obtained for $\mn= \mh_{n}(\CC), \mh_{p,q}'(\HH) \ (pq\not=0), \mh_{n}(\HH), \mh_{1}(\OO)$, are all anisotropic, and the last three admit non-regular deformations, suited to extend the arguments of \cite{BM} to obtain new Einstein metrics.
Details are left for a sequel, where the boundary structures $(\gamma,\delta)$ will be described for each  $divH$ type.

\begin{table}
	\caption{Langlands factors of $divH$ parabolics}
	\begin{center}
		\begin{tabular}{c c c c c}
			\hline\noalign{\smallskip}
			$\mg$	&  $\mm$ &  dim$\ma$ &  $\mn$	 & $\Sigma$	  \\
			\noalign{\smallskip}\hline\noalign{\smallskip}
			$\msl(n,\RR)$ &  $\msl(n-2,\RR)$ & $2$ & $\mh_{n-2}(\RR)$ &  $\{\alpha_{1}, \alpha_{n-1}\}$   \\
			$\msl(n,\CC)$  & $\msl(n-2,\CC)\oplus\RR^{2}$  & $2$ & $\mh_{n-2}(\CC)$  & $\{\alpha_{1}, \alpha_{n-1}\}$  \\
			$\msu^{*}(2n)$ & $\msu(2)^{2}\oplus\msu^{*}(2n-4)$ & $2$ & $\mh_{n-2}(\HH)$ & $\{\alpha_{2}, \alpha_{n-2}\}$  \\
			
			$\msu(p,q)$ &  $\msu(p-1,q-1)\oplus\RR$ & $1$ & $\mh_{p+q-2}(\RR)$ &  $\{\alpha_{1}, \alpha_{p+q-1}\}$  \\
			
			$\msp(n,\RR)$ &  $\msp(n-1,\RR)$  & $1$  & $\mh_{n-1}(\RR)$ &  $\{\alpha_{1}\}$ \\
			$\msp(p,q)$  & $\msu(2)\oplus\msp(p-1,q-1)$ & $1$ & $\mh'_{p-1,q-1}(\HH)$ & $\{\alpha_{2}\}$ \\
			$\msp(n,\CC)$  & $\msp(n-1,\CC)\oplus\RR$ & $1$ & $\mh_{n-1}(\CC)$ &  $\{\alpha_{1}\}$  \\
			$\mso(p,q)$  &  $\msl(2,\RR)\oplus\mso(p - 2,q - 2)$ & $1$ & $\mh_{p+q - 4}(\RR)$ &  $\{\alpha_{2}\}$  \\
			$\mso^{*}(2n)$  & $\msu(2)\oplus\mso^{*}(2n-4)$ & $1$ & $\mh_{2n-4}(\RR
			)$   & $\{\alpha_{2}\}$  \\
			$\mso(n,\CC)$  & $\msl(2,\CC)\oplus\mso(n-4,\CC)\oplus\RR$ & $1$ & $\mh_{n-4}(\CC)$ & $\{\alpha_{2}\}$\\
			$EI$& $\msl(6,\RR)$&$1$&$\mh_{10}(\RR)$&$\{\alpha_{6}\}$\\
			$EII$&$\msu(3,3)$&$1$&$\mh_{10}(\RR)$&$\{\alpha_{6}\}$\\
			$EIII$&$\msu(1,5)$&$1$&$\mh_{10}(\RR)$&$\{\alpha_{6}\}$\\
			$EIV$&$\mso(8)$&${2}$&$\mh_{1}(\OO)$&$\{\alpha_{1}, \alpha_{5}\}$\\
			$E_{6}$&$\msl(6,\CC)\oplus\RR$&$1$&$\mh_{10}(\CC)$&$\{\alpha_{6}\}$\\
			$EV$&$\mso(6,6)$&$1$&$\mh_{16}(\RR)$&$\{\alpha_{6}\}$\\
			$EVI$&$\mso^{*}(12)$&$1$&$\mh_{16}(\RR)$&$\{\alpha_{6}\}$\\
			$EVII$&$\mso(2,10)$&$1$&$\mh_{16}(\RR)$&$\{\alpha_{6}\}$\\
			$E_{7}$&$\mso(12,\CC)\oplus\RR$&$1$&$\mh_{16}(\CC)$&$\{\alpha_{6}\}$\\
			$EVIII$&$EV$&$1$&$\mh_{28}(\RR)$&$\{\alpha_{1}\}$\\
			$EIX$&$EVII$&$1$&$\mh_{28}(\RR)$&$\{\alpha_{1}\}$\\
			$E_{8}$&$E_{7}\oplus\RR$&$1$&$\mh_{28}(\CC)$&$\{\alpha_{1}\}$\\
			$FI$&$\msp(3,\RR)$&$1$&$\mh_{7}(\RR)$&$\{\alpha_{4}\}$\\
			$FII$&$\mso(7)$&$1$&$\mh'_{1,0}(\OO)$&$\{\alpha_{1}\}$\\
			$F_{4}$&$\msp(3,\CC)\oplus\RR$&$1$&$\mh_{7}(\CC)$&$\{\alpha_{4}\}$\\
			$G$&$\msl(2,\RR)$&$1$& $\mh_{2}(\RR)$&$\{\alpha_{2}\}$\\
			$G_{2}$&$\msl(2,\CC)\oplus\RR$&$1$&$\mh_{2}(\CC)$& $\{\alpha_{2}\}$\\
			\hline
			\hline\noalign{\smallskip}
		\end{tabular}
		\label{table:prol}
	\end{center}
\end{table}

% Your bilbigraphy           %<-------------------

\end{document}